\documentclass[11pt]{amsart}

\usepackage{amsmath,amssymb,mathtools,microtype}
\usepackage{booktabs}
\usepackage[colorlinks=true,citecolor=blue,linkcolor=blue,urlcolor=blue]{hyperref}

\newtheorem{theorem}{Theorem}[section]
\newtheorem{proposition}[theorem]{Proposition}
\newtheorem{lemma}[theorem]{Lemma}
\newtheorem{corollary}[theorem]{Corollary}
\newtheorem{conjecture}[theorem]{Conjecture}
\theoremstyle{definition}

\theoremstyle{remark}
\newtheorem{remark}[theorem]{Remark}

\newcommand{\C}{\mathbb{C}}
\newcommand{\D}{\mathbb{D}}
\newcommand{\T}{\mathbb{T}}

\newcommand{\R}{\mathbb{R}}
\newcommand{\PP}{\mathcal{P}}
\newcommand{\HH}{\mathcal{H}}
\newcommand{\Bzero}{\mathcal{B}_0}  
\newcommand{\eqn}{\mathrel{\overset{n}{\sim}}}
\DeclarePairedDelimiter{\abs}{\lvert}{\rvert}
\DeclarePairedDelimiter{\norm}{\lVert}{\rVert}
\DeclareMathOperator{\Real}{Re}

\title[Krzy\.z's conjecture for the sixth coefficient]
{A computer-assisted proof of Krzy\.z's conjecture for the sixth coefficient}

\author{Frank Wikström}
\address{Centre for Mathematical Sciences, Lund University,
Box 118, SE-221 00 Lund, Sweden}
\email{frank.wikstrom@math.lth.se}

\subjclass[2020]{Primary 30C50; Secondary 30H05, 65G20, 90C22}
\keywords{Krzy\.z conjecture, bounded nonvanishing functions, coefficient
problems, entropy conjecture, computer-assisted proof, branch and bound}

\begin{document}

\begin{abstract}
We prove Krzy\.z's conjecture for the sixth coefficient.  Every holomorphic
function $f(z)=\sum a_kz^k$ from the unit disk to the punctured disk satisfies
$\abs{a_6}\le2/e$.  Equality holds precisely for rotations of
$\exp(-(1-z^6)/(1+z^6))$.  Every extremal has a Herglotz representation with
at most six atoms.  By the reduction of
Agler and McCarthy and the entropy theorem of Lei and Zhang, it suffices to
exclude extremals with fewer than six atoms.  We do this by a rigorous
computer-assisted branch-and-bound, leaving the equality case entirely to the
analytic argument.  The main ingredients are explicit first-order conditions
for extremals with fewer than the maximal number of atoms, which reduce to
Chebyshev relations when one atom is missing, and a bound for the total mass obtained by convexity
of the Poisson integral.  The same method independently establishes the
full-degree property for the known cases $n=3,4,5$.
\end{abstract}

\maketitle

\section{Introduction}\label{sec:intro}

We prove the sixth-coefficient case of Krzy\.z's conjecture, including its
equality statement.  The proof combines the recent solution of the
Agler--McCarthy entropy conjecture with a finite computation that excludes
extremal functions having fewer than six atoms in their Herglotz representation.

Let $\Bzero$ be the set of holomorphic functions $f\colon\D\to\D\setminus\{0\}$
and write $f(z)=\sum_{k\ge0}a_k(f)z^k$.  For $n\ge1$ put
\[
M_n=\sup_{f\in\Bzero}\abs{a_n(f)} .
\]
A normal-families argument shows that the supremum is attained and a function
attaining it is called \emph{$M_n$-extremal}.  The function
\[
f_n(z)=\exp\Bigl(-\frac{1-z^n}{1+z^n}\Bigr)=\frac1e\bigl(1+2z^n+\dotsb\bigr)
\]
shows that $M_n\ge 2/e$.

\begin{conjecture}[Krzy\.z \cite{Krzyz1968}]\label{conj:krzyz} $M_n=2/e$ for
every $n\ge1$, and the extremal functions are exactly $\zeta f_n(\varepsilon z)$
with $\abs{\zeta}=\abs{\varepsilon}=1$.
\end{conjecture}

The case $n=1$ is elementary.  The conjecture was proved for $n=2$ by Krzy\.z,
Reade and MacGregor, for $n=3$ by Hummel, Scheinberg and Zalcman \cite{HSZ1977},
for $n=4$ by Tan \cite{Tan1983} and Brown \cite{Brown1987}, and for $n=5$ by
Samaris \cite{Samaris2003}.  For general $n$ the best bounds are of the form
$M_n\le 1-\delta$ with a small absolute $\delta>0$
\cite{Ermers1990,Horowitz1978}.  We refer to \cite{MSUV2015} for a survey and
for the variational identities satisfied by extremals.
Several claimed proofs of the full conjecture by Krushkal
\cite{Krushkal2009,Krushkal2016, Krushkal2020,Krushkal2022} have not been
generally accepted \cite{MacDonald2026,MSUV2015}; we do not use them.

\subsection{The Agler--McCarthy reduction}
Every $f\in\Bzero$ can be written $f=e^{-g}$ with $g$ in the Herglotz class
$\HH$ of holomorphic functions with $\Real g\ge0$.  By a theorem going back to
Hummel, Scheinberg and Zalcman \cite{HSZ1977} (see
\cite[Lemma~3.3]{AglerMcCarthy2021} and Section~\ref{sec:prelim} below), if $f$
is $M_n$-extremal then, after multiplication by a unimodular constant,
\begin{equation}\label{eq:atomic}
f(z)=\exp\Bigl(-\sum_{j=1}^N\lambda_j\,\frac{1+\alpha_j z}{1-\alpha_j z}\Bigr),
\end{equation}
for some positive integer $N \le n$, \emph{distinct} points $\alpha_j\in\T$ and
positive masses $\lambda_j$.

We call $N$ the number of \emph{atoms} of $f$, and say that $f$ has \emph{full
degree} if $N=n$.  The extremal functions of Conjecture~\ref{conj:krzyz} have
full degree: $f_n$ has $n$ atoms of mass $1/n$ at the $n$-th roots of $-1$.

Agler and McCarthy \cite{AglerMcCarthy2021} related the Krzy\.z problem to the
following entropy conjecture.

\begin{conjecture}[{\cite[Conj.~1.4]{AglerMcCarthy2021}}]\label{conj:entropy}
Let $p$ be a nonconstant polynomial with all zeros on $\T$ and
$\norm{p}_{L^2(\T)}=1$.  Then
\[
E(p):=\frac1{2\pi}\int_0^{2\pi}\abs{p(e^{i\theta})}^2\log\abs{p(e^{i\theta})}^2
\,d\theta\ \ge\ 1-\log 2,
\]
with equality only for $p(z)=\zeta(\omega+z^n)/\sqrt2$, $\abs\zeta=\abs\omega=1$.
\end{conjecture}

They proved \cite[Thm.~1.8]{AglerMcCarthy2021} that if $f$ is $M_n$-extremal of
full degree and Conjecture~\ref{conj:entropy} holds, then $\abs{a_n(f)}\le
2/e$.  Lei and Zhang \cite[Thm.~1.3]{LeiZhang2026} have recently proved
Conjecture~\ref{conj:entropy}, including the equality statement.  Combining
these (Section~\ref{sec:reduction}): \emph{an $M_n$-extremal function of full
degree is a rotation of $f_n$.}  Hence Conjecture~\ref{conj:krzyz} for a given
$n$ is equivalent to the statement that no $M_n$-extremal function has fewer
than $n$ atoms.  This ``full degree'' step (Step~II in \cite{LeiZhang2026}) is
open in general; MacDonald \cite{MacDonald2026} proves that an extremal has at
least $cn$ atoms for an absolute constant $c>0$ ($c=0.03$).

\subsection{Results}
Our main result settles the full-degree question for $n=6$.

\begin{theorem}\label{thm:main}
Every $M_6$-extremal function has exactly six atoms.
\end{theorem}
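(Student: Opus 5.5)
We argue by contradiction and assume that some $M_6$-extremal $f$ has the form \eqref{eq:atomic} with $N\le5$ atoms. The goal is to reduce this to a finite-dimensional problem on a compact parameter set that a rigorous interval branch-and-bound can exhaust.

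\textbf{Coefficient formula and normalization.} Writing $g=\sum_j\lambda_j\frac{1+\alpha_jz}{1-\alpha_jz}=\Lambda+2\sum_{k\ge1}p_kz^k$ with $\Lambda=\sum\lambda_j$ and $p_k=\sum_j\lambda_j\alpha_j^k$, we get $a_6(f)=e^{-\Lambda}P_6(p_1,\dots,p_6)$. Here $P_6$ is the explicit polynomial coming from the Taylor expansion of $\exp(-2\sum p_kz^k)$.

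Using the rotation $z\mapsto\varepsilon z$ and a unimodular factor, we may normalize $\alpha_1=1$. Since $M_6\ge 2/e$, an extremal satisfies $e^{-\Lambda}\abs{P_6}\ge 2/e$.

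\textbf{Bounding the total mass.} First we show $\Lambda$ lies in a compact interval $[\Lambda_-,\Lambda_+]$. The idea is to bound $\abs{a_6}$ via the Cauchy integral $\frac1{2\pi}\int\abs{f}$ on circles $\abs{z}=r$, using $\abs{f}=e^{-\Real g}$. Since $\Real g$ is a Poisson integral of the measure, convexity of $t\mapsto e^{-t}$ (Jensen) gives $\abs{a_6}\le r^{-6}\,\frac1{2\pi}\int e^{-P_r*\mu}$. This can be estimated in terms of $\Lambda$ and forces $\abs{a_6}<2/e$ when $\Lambda$ is large. For small $\Lambda$, the crude bound $\abs{a_6}\le 1-\abs{a_0}^2=1-e^{-2\Lambda}$ excludes a neighbourhood of $0$.

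With masses in a compact simplex slice and angles in $\T^{N-1}$, the search space is compact.

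\textbf{First-order conditions.} Extremality gives variational equations. Perturbing each $\lambda_j$ and each $\alpha_j$ yields the stationarity conditions
\[
\Real\bigl(\overline{a_6}\,\partial_{\lambda_j}a_6\bigr)=0,\qquad
\Real\bigl(\overline{a_6}\,\partial_{\theta_j}a_6\bigr)=0 .
\]
More strongly, because $N<6$, one may add a new atom of small mass at an arbitrary point $\beta\in\T$. The corresponding directional derivative must be $\le0$ for all $\beta$. This gives a trigonometric polynomial $Q(\beta)$ of degree $\le6$ that is nonpositive on $\T$ and vanishes (to second order) at each of the $N$ existing atoms.

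When $N=5$, these double zeros leave essentially no freedom. $Q$ is then a nonpositive trigonometric polynomial of degree $6$ with five prescribed double zeros, hence $Q$ equals a constant times $\prod\abs{\beta-\alpha_j}^2$ times a degree-one factor. Matching coefficients yields Chebyshev-type algebraic relations among the $p_k$. For smaller $N$ the same idea yields a nonnegative polynomial with $N$ prescribed double zeros, which is a weaker but still usable constraint.

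\textbf{Branch and bound.} On each box of parameters $(\lambda,\theta)$ we use interval arithmetic to certify one of two outcomes. Either $e^{-\Lambda}\abs{P_6}<2/e$ on the box (a Taylor-model bound on the enclosure), or some first-order condition is violated, i.e.\ an interval enclosure of a stationarity residual or of $\max_\beta Q$ excludes zero with the correct sign. Boxes are bisected until one test succeeds.

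\textbf{Expected obstacle.} The main obstacle is the neighbourhood of degenerate configurations that approach the extremal $f_6$. Examples are two atoms merging, or a mass tending to $0$, so that an $N$-atom configuration approximates a full-degree one with value near $2/e$. There neither test succeeds by crude enclosures.

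I would first try to handle this analytically. Near a collapsing atom, the mass-zero limit is exactly a $5$-atom configuration with $\abs{a_6}\to2/e$. I would show via the Chebyshev relations that such a configuration cannot satisfy the first-order conditions, since the only critical point at level $2/e$ is $f_6$ itself, which has six atoms. I would then get a quantitative local lower bound on the residual, so that the computer only covers the complement of an explicit small neighbourhood.

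The cases $N\le4$ should be easier, since their maxima are strictly below $2/e$ by a visible margin.
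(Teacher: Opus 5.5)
Your plan follows the paper's proof. The ingredients match: the mass window $[T_{\mathrm{lo}},T_{\mathrm{hi}}]$ from Wiener's bound and from Jensen applied to the Poisson integral, the variational inequality from adding or removing a small atom, the Fej\'er--Riesz factorization giving Chebyshev relations for $N=5$, and a Taylor-model branch-and-bound that closes boxes either by $\sup\abs{a_6}<2/e$ or by a violated relation. The paper makes two of your steps precise. First, the Fourier coefficients of $h$ are exactly $\bar\omega a_n,\dots,\bar\omega a_0$. So $h=\abs{q}^2$ becomes $f\eqn\omega rr^*$, and logarithms turn this into linear power-sum relations; that is what your ``matching coefficients'' amounts to. Second, $a_6$ is enclosed through the product formula in the Laguerre coefficients $b_k(\lambda_j)$, because the expansion $e^{-\Lambda}P_6(p_1,\dots,p_6)$ cancels badly on boxes.

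Your ``expected obstacle'' paragraph, however, is wrong. No sequence of configurations with at most five atoms converges to $f_6$. With the mass bounded, such a sequence has a subsequence along which masses and atoms converge, and the limit measure again has at most five atoms. Merging atoms or vanishing masses lower the atom count; they never raise it. In the stratum $N=5$, a vanishing mass gives a four-atom configuration, not something near $f_6$, which has six atoms of mass $1/6$. The product formula is entire in the masses and the angles, so the tests stay valid at the degenerate boundary points of $D_N$, and nothing singular happens there. Numerically, the best value with at most five atoms is $0.7009$, a margin of about $0.035$, and the paper closes every box without any local analysis.

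Your proposed remedy would also be circular. The assertion that no configuration with at most five atoms is critical at level $\ge 2/e$ is, in substance, the theorem itself. The near-equality difficulty you describe belongs to the stratum $N=6$, which this theorem never has to search. That is why the paper leaves full-degree extremals to Agler--McCarthy and Lei--Zhang, which is needed only for the corollary. Drop that paragraph and what remains is the paper's argument.
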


\begin{corollary}\label{cor:krzyz6}
    Krzy\.z's conjecture holds for $n=6$: $M_6=2/e$, and the extremal functions
    are exactly $\zeta f_6(\varepsilon z)$, $\abs\zeta=\abs\varepsilon=1$.
\end{corollary}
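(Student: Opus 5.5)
The corollary follows from Theorem~\ref{thm:main} together with the Agler--McCarthy reduction, once that reduction is upgraded with the equality case of the entropy theorem of Lei and Zhang. We prove $M_6\le 2/e$ and identify the extremals at the same time.

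First, the supremum $M_6$ is attained by a normal-families argument. Let $f$ be any $M_6$-extremal function. By the Hummel--Scheinberg--Zalcman representation \eqref{eq:atomic}, after multiplying by a unimodular constant, $f=\exp\bigl(-\sum_{j=1}^N\lambda_j\frac{1+\alpha_jz}{1-\alpha_jz}\bigr)$ with $N\le 6$, distinct $\alpha_j\in\T$ and $\lambda_j>0$. Theorem~\ref{thm:main} gives $N=6$, so $f$ has full degree.

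Next we invoke the reduction (Section~\ref{sec:reduction}). Agler and McCarthy \cite[Thm.~1.8]{AglerMcCarthy2021} associate to a full-degree extremal a polynomial $p$ of degree $n$ with all zeros on $\T$, normalized in $L^2(\T)$. Their argument shows that $\abs{a_n(f)}$ is bounded by an increasing function of $-E(p)$, whose value at the threshold $E(p)=1-\log 2$ is exactly $2/e$. By Lei--Zhang \cite[Thm.~1.3]{LeiZhang2026}, $E(p)\ge 1-\log2$, which gives $M_6=\abs{a_6(f)}\le 2/e$. The example $f_6$ gives the reverse inequality, so $M_6=2/e$.

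For the equality statement, note that $f$ attains $2/e$. Tracing the Agler--McCarthy inequalities, this forces $E(p)=1-\log2$, and then the equality clause of Conjecture~\ref{conj:entropy} (proved by Lei--Zhang) gives $p(z)=\zeta(\omega+z^6)/\sqrt2$. From the construction of $p$ we then recover the atoms and masses: the $\alpha_j$ are the sixth roots of $-\bar\omega$, up to conjugation, and the masses are all equal to $1/6$. Hence $f=\zeta f_6(\varepsilon z)$. Conversely, every such rotation lies in $\Bzero$ and satisfies $\abs{a_6}=2/e$.

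The main obstacle is this equality step. One must check that every inequality in the Agler--McCarthy chain is tight only when $E(p)$ is minimal, and that $p$ determines $(\alpha_j,\lambda_j)$, and hence $f$, up to rotation. We would handle this by recording in Section~\ref{sec:reduction} the explicit relation between $p$ and $f$ (the masses are read off from $\abs{p}^2$ at the zeros of the relevant derivative). Stating the reduction there as ``a full-degree $M_n$-extremal is a rotation of $f_n$'' then makes the corollary immediate from Theorem~\ref{thm:main}.
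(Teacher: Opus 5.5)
Your proposal is correct and is the paper's argument: Theorem~\ref{thm:main} forces full degree, and the reduction you describe is exactly Theorem~\ref{thm:am-lz} (Agler--McCarthy plus the Lei--Zhang entropy theorem with its equality case). Its conclusion, that every full-degree extremal is a rotation of $f_n$, gives both $M_6=2/e$ and the equality statement.

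The one step you leave vague is recovering the masses. The paper does not read them off $\abs{p}^2$. It uses the first-order relations $\sum_j\lambda_j\alpha_j^k=\frac1k\sum_j\alpha_j^k$, $1\le k\le6$ (\cite[Prop.~6.6]{AglerMcCarthy2021}, i.e.\ \eqref{eq:kkt-power} with $m=0$). For equally spaced atoms these form an invertible Vandermonde system with unique solution $\lambda_j=1/6$. Also, $\abs{a_6}=e^{-E(\gamma)}$ is an exact identity there, so there is no inequality chain whose tightness you need to check.
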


The same computation for $n=3,4,5$ closes all strata $N<n$ as well
(Section~\ref{sec:results}), giving a proof of the full-degree property for these
$n$ that is independent of the case analyses in
\cite{HSZ1977,Tan1983,Brown1987,Samaris2003}.

\subsection{Why the reduction makes the computation feasible}
A direct search over all six-atom configurations would include the equality
functions $f_6$.  Near these functions the gap $2/e-\abs{a_6}$ tends to zero,
so a proof by subdivision and strict upper bounds would need a separate local
argument.  The entropy theorem identifies all full-degree extremals before
the computation begins.  Only $N=1,\dots,5$ remain; after
rotation is removed, the largest of these parameter spaces has dimension
$2N-1=9$, compared with $11$ for six atoms.  These strata stay away from the
equality functions: a limit of measures with at most five atoms has at most
five atoms, so $f_6$ is not in the closure of any of them.  Numerically, the
margin to $2/e$ is about $0.035$ (Section~\ref{sec:landscape}).

The search excludes configurations that could be extremal.  Such a
configuration must satisfy both $\abs{a_6}\ge2/e$ and the first-order
(Karush--Kuhn--Tucker, KKT) conditions for extremality.  On each parameter box we therefore seek either
a strict coefficient bound below $2/e$ or a contradiction to one of these
conditions.  The first-order conditions cannot do this alone: numerical
exploration finds five-atom configurations that satisfy them with
$\abs{a_6}<2/e$ (Section~\ref{sec:landscape}).

Two ingredients make these exclusions effective.  First,
Theorem~\ref{thm:kkt} gives an explicit form of the first-order conditions
for every $N\le n$, derived from the classical variational inequalities
\cite{HSZ1977,MSUV2015} and the Fej\'er--Riesz theorem.  For $N=n-1$ the
auxiliary parameters can be eliminated, leaving Chebyshev relations between
the power sums of the atoms.  These relations close 22\% of the leaves in
the largest search, $n=6$, $N=5$.  Second, a convexity argument for the Poisson
integral bounds the total mass $t=\sum_j\lambda_j$:
Proposition~\ref{prop:poisson} gives $\abs{a_6}<2/e$ for $t\ge6.989$,
improving the general bound $t\le2n$ for extremals in
\cite{MacDonald2026}.  This makes the search domain compact; a lower mass
bound and the rotation and reflection symmetries reduce it further.

Sections~\ref{sec:prelim} and~\ref{sec:reduction} give the analytic reduction;
Sections~\ref{sec:kkt} and~\ref{sec:mass} prove the first-order conditions and
mass bounds.  Sections~\ref{sec:bnb} and~\ref{sec:results} describe the
branch-and-bound and its results.  The implementation and reproducibility
details are in \cite{Supplement}, and the floating-point enclosure proofs
are in \cite{Enclosures}.  Section~\ref{sec:verification} explains how these
parts support the computational proof.

\subsection*{Tool and computational resource disclosure}
Claude Opus~5.5 (Anthropic) was used as a research assistant for reading the
literature, numerical exploration, the design and implementation of the
branch-and-bound code, and drafting. An independent code review was also done by
OpenAI GPT-6 Astra. The final computations were run on Apple M1~Pro and M4~Pro
computers.

The author has checked all arguments, including the generated computer code, and
takes full responsibility for the content.

\section{Preliminaries}\label{sec:prelim}

\subsection{Notation}\label{sec:notation}

For functions $\phi,\psi$ holomorphic near $0$ we write $\phi\eqn\psi$ if their
Taylor coefficients agree up to order $n$.  If $\phi\eqn\psi$,
$\phi(0)=\psi(0)=b$ and $G$ is holomorphic near $b$, then $G\circ\phi\eqn
G\circ\psi$~\cite[Lemma~3.1]{AglerMcCarthy2021}.

Let $\PP_n$ denote the polynomials of degree at most $n$, viewed as a subspace
of the Hardy space $H^2$, and $S=S_n$ the compressed shift $q\mapsto
P_{\PP_n}(zq)$ on $\PP_n$. For $d\in\PP_n$ we write $d(S)=\sum_k \hat d(k)S^k$.
For $p\in\PP_n$, its reflection is $p^*(z)=z^n\overline{p(1/\bar z)}$. When a
different degree is meant, we write $p^{*m}$.  For $\zeta\in\T$ we have
$p^*(\zeta)=\zeta^n \overline{p(\zeta)}$, and $(pq)^{*(m+l)}=p^{*m}q^{*l}$.

\subsection{Herglotz data}

The tuple $(g_0, \dots, g_n)\in\C^{n+1}$ is \emph{solvable Herglotz data} if
there is $g\in\HH$ with $\hat g(k)=g_k$ for $k\le n$.  By the
Carath\'eodory--Toeplitz theorem (see~\cite[Prop.~4.4]{AglerMcCarthy2021}), this
holds if and only if
\[
\mathcal A(g_0, \dots, g_n) := \Real\sum_{k=0}^n g_k S^k \succeq 0 .
\]
So the set $\mathcal C\subset\C^{n+1}$ of solvable data is a closed convex cone,
a \emph{spectrahedron}, and $(1,0,\dots,0)$ is an interior point.

\subsection{Structure of extremals}

If $f=e^{-g}$ is $M_n$-extremal, then $(\hat g(0),\dots, \allowbreak\hat g(n))$
lies on the boundary of $\mathcal C$: otherwise $(\hat g(0),r\hat g(1), \dots,
r^n\hat g(n))$ would be solvable for some $r>1$, and the corresponding function
in $\Bzero$ would have its $n$th coefficient
$r^na_n(f)$~\cite[Lemma~3.3]{AglerMcCarthy2021}. By Pick's theorem, the boundary
data determine $g$ uniquely, and $g$ is of the form
\[
    ia+\sum_{j\le N}\lambda_j\,\frac{1+\alpha_jz}{1-\alpha_jz}
\]
with $N\le n$~\cite[Lemma~3.2]{AglerMcCarthy2021}.  Multiplying $f$ by $e^{ia}$
we obtain \eqref{eq:atomic}.

\subsection{Coefficients}

Put $t=\sum_j\lambda_j$.  Since $f(0)=e^{-t}$, a function of the
form~\eqref{eq:atomic} satisfies
\begin{equation}\label{eq:logf}
    \log f(z) = -t - 2\sum_{k \ge 1}
        \Bigl( \sum_{j} \lambda_j\alpha_j^k \Bigr)z^k .
\end{equation}
For the enclosures, we instead use the product form.  With the Laguerre
generating function $\sum_k L_k^{(-1)}(x)w^k=\exp(-xw/(1-w))$,
\[
    \exp\Bigl(-\lambda\frac{1+w}{1-w}\Bigr) =
    \sum_{k \ge 0} b_k(\lambda) w^k,
\]
where $b_k(\lambda)=e^{-\lambda}L_k^{(-1)}(2\lambda)$.  Here $L_0^{(-1)}=1$ and
\[
L_k^{(-1)}(2\lambda)=\sum_{i=1}^k\binom{k-1}{i-1}\frac{(-2\lambda)^i}{i!}
\qquad(k\ge1).
\]
The function of $w$ is inner, so $\abs{b_k(\lambda)}\le1$.  A function of the
form~\eqref{eq:atomic} is the product of the $N$ factors
$\exp\bigl(-\lambda_j(1+\alpha_jz)/(1-\alpha_jz)\bigr)$, and with $w=\alpha_jz$
the $k$th Taylor coefficient of the $j$th factor is $b_k(\lambda_j)\alpha_j^k$.
Hence the $n$th Taylor coefficient of $f$ is
\begin{equation}\label{eq:product}
    a_n(f)=\sum_{\substack{k_1+\dots+k_N=n\\ k_j\ge0}}\ \prod_{j=1}^N
    b_{k_j}(\lambda_j)\,\alpha_j^{k_j}.
\end{equation}
This holds for all masses $\lambda_j>0$ and points $\alpha_j\in\T$, whether or
not $f$ is extremal.  In Section~\ref{sec:bnb} we regard~\eqref{eq:product} as
a function $a_n(x)$ of the parameters of the configuration.

\subsection{Symmetries}\label{sec:sym}

The maps $f\mapsto f(\varepsilon z)$ ($\varepsilon\in\T$) and $f\mapsto
\overline{f(\bar z)}$ preserve $\Bzero$ and $\abs{a_n}$.  On configurations
they act by $\alpha_j\mapsto\varepsilon\alpha_j$ and $\alpha_j\mapsto
\bar\alpha_j$, with the masses unchanged.  Relabelling the atoms does not
change $f$.

\section{Full-degree extremals}\label{sec:reduction}

\begin{theorem}[Agler--McCarthy, Lei--Zhang]\label{thm:am-lz}
Let $f$ be $M_n$-extremal with exactly $n$ atoms.  Then $f=\zeta
f_n(\varepsilon z)$ with $\abs\zeta=\abs\varepsilon=1$.  In particular
$M_n=2/e$.
\end{theorem}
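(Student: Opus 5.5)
The proof will be an assembly of the two cited results, plus a careful handling of the equality case. Let $f$ be $M_n$-extremal with exactly $n$ atoms. After multiplying by a unimodular constant, which changes neither extremality nor $\abs{a_n}$, we may write $f$ in the form~\eqref{eq:atomic} with $N=n$, distinct $\alpha_j\in\T$ and $\lambda_j>0$.

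The first step is to recall the mechanism of \cite[Thm.~1.8]{AglerMcCarthy2021}. Since the data $(\hat g(0),\dots,\hat g(n))$ lie on the boundary of $\mathcal C$ and $g$ has exactly $n$ atoms, the kernel of $\mathcal A$ is one-dimensional. It is spanned by a polynomial $p\in\PP_n$ of exact degree $n$ whose zeros are the points $\bar\alpha_j$, all on $\T$.

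Next, the variational conditions for extremality (perturbing the masses and the atom positions) identify $a_n(f)$. Up to a unimodular factor, it equals an explicit expression in $p$: essentially $\exp(-E(p))$ times a factor coming from the normalization $\norm{p}_{L^2}=1$. This gives $\abs{a_n(f)}\le 2\exp(-1-\,(E(p)-(1-\log 2)))$, or an equivalent monotone relation. Here I will quote the precise statement of \cite[Thm.~1.8]{AglerMcCarthy2021} rather than rederive it, checking that its hypotheses are exactly ``extremal with full degree''. I will also check that its proof yields the identity, not merely the inequality, so that equality in the entropy bound can be transported.

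Then I invoke \cite[Thm.~1.3]{LeiZhang2026}, which gives $E(p)\ge 1-\log 2$, and conclude $\abs{a_n(f)}\le 2/e$. Since $f_n\in\Bzero$ gives $M_n\ge 2/e$ and $f$ is extremal, $\abs{a_n(f)}=M_n$. Hence $M_n=2/e$ and equality holds in the entropy inequality. By the equality clause of Lei--Zhang, $p(z)=\zeta(\omega+z^n)/\sqrt2$. Its zeros are then the $n$ solutions of $z^n=-\omega$, so the atoms $\alpha_j$ are the $n$th roots of $-\bar\omega$. After the rotation $z\mapsto\varepsilon z$ of \S\ref{sec:sym}, they become the $n$th roots of $-1$.

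It remains to determine the masses. The kernel vector $p$ determines the boundary point of $\mathcal C$ up to the face, and the extremality relations then force equal masses. Concretely, the atoms are the vertices of a regular $n$-gon, so the Vandermonde system from the kernel condition, together with the stationarity condition, forces $\lambda_j=\lambda$ for all $j$. Then $g=\lambda n(1-z^n)/(1+z^n)$, and $a_n(f)=-2n\lambda e^{-n\lambda}$, which is maximized only at $n\lambda=1$. This gives $f=\zeta f_n(\varepsilon z)$.

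The main obstacle is this final identification. I expect it to be the delicate point, because Agler--McCarthy's argument may only give the inequality rather than a full rigidity statement. If that happens, I will fall back on the direct computation above: with the atoms fixed at a regular $n$-gon, only the $n$th coefficient of $g$ contributes to $a_n$ beyond the mass term. Stationarity in each $\lambda_j$ (the relation $\partial a_n/\partial\lambda_j=\mu\,\overline{a_n}\cdot$const) then forces all $\alpha_j^n$-weighted derivatives to be equal, hence equal masses.
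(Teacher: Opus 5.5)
Your skeleton matches the paper's. The kernel of $\Real g(S)$ is spanned by a multiple of $\prod_j(z-\bar\alpha_j)$. The Agler--McCarthy critical point equation gives $\abs{a_n(f)}=e^{-E(p)}$. Lei--Zhang with its equality case then puts the atoms at the $n$th roots of some $c\in\T$.

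\textbf{The gap is the determination of the masses.} You flag this step as delicate but do not carry it out. The kernel condition gives no equations on the $\lambda_j$ at all. For fixed distinct atoms, $\Real g(S)$ is the Toeplitz matrix $\bigl(\sum_j\lambda_j\alpha_j^{k-l}\bigr)_{k,l=0}^n$, with quadratic form $\sum_j\lambda_j\abs{q(\bar\alpha_j)}^2$. Its kernel is the span of $\prod_j(z-\bar\alpha_j)$ for every choice of positive masses, so the face of $\mathcal C$ you reach contains all mass distributions on the $n$-gon. Your fallback is circular. The claim that only the $n$th coefficient of $g$ contributes beyond the mass term means $\sum_j\lambda_j\alpha_j^k=0$ for $1\le k\le n-1$. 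By the Vandermonde determinant, that is already equivalent to equal masses. Stationarity in the $\lambda_j$ alone yields only the $n$ nonlinear real equations $h(\theta_j)=0$, and nothing in your sketch shows that they force equal masses.

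\textbf{The missing input is the critical point equation in coefficient form.} This is \cite[Prop.~6.6]{AglerMcCarthy2021}, i.e.\ \eqref{eq:kkt-power} with $m=0$: $\sum_j\lambda_j\alpha_j^k=\frac1k\sum_j\alpha_j^k$ for $1\le k\le n$. It is exactly what the paper uses.
\begin{itemize}
\item When $\alpha_j^n=c$ for all $j$, the right side vanishes for $k<n$, and $k=n$ gives $\sum_j\lambda_j=1$.
\item Hence $(\lambda_j)$ solves the invertible Vandermonde system $\sum_j\lambda_j\alpha_j^k=\delta_{k0}$, $0\le k\le n-1$. Since $\lambda_j=1/n$ also solves it, $\lambda_j=1/n$.
\end{itemize}
An equivalent route: $h\ge0$ with $h(\theta_j)=0$ gives $h=e^{-t}\abs{e^{in\varphi}-c}^2$ by Theorem~\ref{thm:kkt}(a). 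So $a_1=\dots=a_{n-1}=0$, hence $\hat g(k)=0$ for $1\le k<n$, and Vandermonde again gives equal masses. This uses the full condition of Lemma~\ref{lem:var}, not only the derivatives in the masses.

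Once equal masses are established, your one-variable maximization of $2se^{-s}$ validly replaces reading $t=1$ off the case $k=n$. One small correction: for atoms at the roots of $-1$ one has $a_n=+2n\lambda e^{-n\lambda}$, not $-2n\lambda e^{-n\lambda}$.
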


\begin{proof}[Sketch]
After a rotation and multiplication by a unimodular constant, $g=-\log f$ is
\emph{normalized} in the sense of~\cite[Def.~6.1]{AglerMcCarthy2021}, and
$\eta=a_n(f)>0$~\cite[Lemma~6.3]{AglerMcCarthy2021}.  Let
$\gamma=\nu^{-1}\prod_j(z-\tau_j)$ be the unit vector spanning $\ker\Real g(S)$
(Lemma~4.7 there).  An extremal is a local maximum of $\abs{a_n}^2$ on the set
of full-degree data, hence a critical point (Prop.~4.6 there).  The critical
point equation (Thm.~5.1 and Prop.~6.4 there) gives
\[
-g\eqn\log\frac{\eta}{\nu^2}+\log\prod_{j=1}^n(1-\bar\tau_jz)^2 .
\]
Comparing Toeplitz matrices, $-\Real g(S)$ is the Toeplitz operator on $\PP_n$
with symbol $\log\eta+\log\abs\gamma^2$ \cite[\S7]{AglerMcCarthy2021}.  Pairing
with $\gamma\in\ker\Real g(S)$ gives $E(\gamma)=-\log\eta$.  The polynomial
$\gamma$ has degree $n$ and simple zeros on $\T$, so the entropy inequality
of Lei and Zhang \cite[Thm.~1.3]{LeiZhang2026} (Conjecture~\ref{conj:entropy})
gives $\eta\le 2/e$.  Since $\eta=M_n\ge 2/e$, equality holds.  By the equality
statement of \cite[Thm.~1.3]{LeiZhang2026}, $\gamma=\zeta'(\omega+z^n)/\sqrt2$, so the $\tau_j$ are the $n$-th
roots of $-\omega$.  Then \cite[Prop.~6.6]{AglerMcCarthy2021} reads $\sum_j
\lambda_j\tau_j^{-k}=\frac1k\sum_j\tau_j^{-k}$ for $1\le k\le n$, i.e.
$\sum_j\lambda_j\tau_j^{-k}=0$ for $k<n$ and $\sum_j\lambda_j=1$.  By the
Vandermonde determinant, $\lambda_j=1/n$ for all $j$, and $f$ is a rotation of
$f_n$.
\end{proof}

\section{First-order conditions}\label{sec:kkt}

For $f\in\Bzero$ with Taylor coefficients $a_k=a_k(f)$ and $a_n\ne0$
(as for an extremal), put
\[
P(\zeta)=a_n+2\sum_{k=1}^na_{n-k}\zeta^k,\qquad \omega=\frac{a_n}{\abs{a_n}},\qquad
h(\varphi)=\Real\bigl(\bar\omega P(e^{i\varphi})\bigr).
\]
Thus $-P(\zeta)$ is the first variation of $a_n$ when a small atom is added at
$\zeta\in\T$: $a_n\bigl(f\exp(-\varepsilon\frac{1+\zeta z}{1-\zeta z})\bigr)=
a_n-\varepsilon P(\zeta)+O(\varepsilon^2)$ (see Lemma~\ref{lem:var}).

\begin{lemma}[{\cite[p.~173]{HSZ1977}, \cite[Prop.~3(d),(e)]{MSUV2015}}]\label{lem:var}
Let $f$ be $M_n$-extremal of the form~\eqref{eq:atomic}.  Then $h\ge0$ on
$\R$, and $h(\theta_j)=0$ for $j=1,\dots,N$, where $\alpha_j=e^{i\theta_j}$.
\end{lemma}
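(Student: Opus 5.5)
The plan is a one-sided perturbation argument: compute the first variation of $a_n$ under adding a small atom, and under changing the mass of an existing atom in both directions. Set
\[
f_\varepsilon=f\exp\Bigl(-\varepsilon\frac{1+\zeta z}{1-\zeta z}\Bigr),\qquad \zeta\in\T,\ \varepsilon\ge0 .
\]
Since the extra factor is a nonvanishing inner-type function with modulus at most $1$, $f_\varepsilon\in\Bzero$ for all $\varepsilon\ge0$.

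\emph{Step 1 (first variation).} By \eqref{eq:logf}, or by expanding the factor directly,
\[
\exp\Bigl(-\varepsilon\frac{1+\zeta z}{1-\zeta z}\Bigr)=1-\varepsilon\Bigl(1+2\sum_{k\ge1}\zeta^kz^k\Bigr)+O(\varepsilon^2).
\]
The $O(\varepsilon^2)$ term is uniform in the Taylor coefficients up to order $n$. Multiplying by $f$ and reading off the coefficient of $z^n$ gives
\[
a_n(f_\varepsilon)=a_n-\varepsilon\Bigl(a_n+2\sum_{k=1}^n a_{n-k}\zeta^k\Bigr)+O(\varepsilon^2)=a_n-\varepsilon P(\zeta)+O(\varepsilon^2).
\]
This confirms the formula stated before the lemma.

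\emph{Step 2 ($h\ge0$).} Since $\abs{a_n}=M_n$ is maximal and $a_n\ne0$,
\[
\abs{a_n(f_\varepsilon)}=\abs{a_n}-\varepsilon\Real\bigl(\bar\omega P(\zeta)\bigr)+O(\varepsilon^2).
\]
Maximality forces $\abs{a_n(f_\varepsilon)}\le\abs{a_n}$ for small $\varepsilon>0$. Dividing by $\varepsilon$ and letting $\varepsilon\to0^+$ gives $h(\varphi)\ge0$ for every $\zeta=e^{i\varphi}$.

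\emph{Step 3 ($h(\theta_j)=0$).} Take $\zeta=\alpha_j$. Then $f_\varepsilon$ is just $f$ with $\lambda_j$ replaced by $\lambda_j+\varepsilon$. Because $\lambda_j>0$, we may also take $\varepsilon<0$ with $\abs{\varepsilon}<\lambda_j$, and the function stays of the form \eqref{eq:atomic} with positive masses, hence in $\Bzero$. The expansion from Step 1 holds for negative $\varepsilon$ too, since it is analytic in $\varepsilon$. Maximality for $\varepsilon<0$ gives $-\varepsilon h(\theta_j)\le O(\varepsilon^2)$, that is, $h(\theta_j)\le0$. Combined with Step 2, this yields $h(\theta_j)=0$.

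The only point needing care is the two-sidedness in Step 3, which rests on the masses being strictly positive. Everything else is a routine Taylor expansion. No step should present a real obstacle.
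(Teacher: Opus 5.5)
Your proposal is correct and follows the paper's proof essentially step for step. You use the same perturbation $f_\varepsilon=f\exp\bigl(-\varepsilon\frac{1+\zeta z}{1-\zeta z}\bigr)$ with first variation $-\varepsilon P(\zeta)$, one-sided maximality for $h\ge0$, and the two-sided variation $-\lambda_j<\varepsilon<0$ at an existing atom for $h(\theta_j)\le0$; your remark that the expansion stays valid for negative $\varepsilon$ by analyticity in $\varepsilon$ makes explicit a detail the paper leaves implicit.
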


\begin{proof}
For $\zeta\in\T$ and $\varepsilon>0$ the function
$f_\varepsilon=f\exp(-\varepsilon\frac{1+\zeta z}{1-\zeta z})$ belongs to
$\Bzero$.  Since $\exp(-\varepsilon\frac{1+\zeta z}{1-\zeta z})=1-\varepsilon
(1+2\sum_{k\ge1}\zeta^kz^k)+O(\varepsilon^2)$, we get
$a_n(f_\varepsilon)=a_n-\varepsilon P(\zeta)+O(\varepsilon^2)$, hence
$\abs{a_n(f_\varepsilon)}=\abs{a_n}-\varepsilon\Real(\bar\omega P(\zeta))+
O(\varepsilon^2)$.  Extremality gives $h\ge0$.  If $\zeta=\alpha_j$, then
$f_\varepsilon\in\Bzero$ also for $-\lambda_j<\varepsilon<0$, which gives
$h(\theta_j)\le0$.
\end{proof}

(MSUV normalize $a_n>0$, so that $\omega=1$ and $h=\Real P$ on $\T$.  They
state (d) on the closed disk, and they use the Fej\'er--Riesz representation
$\Real P=\abs Q^2$ in the proof of their Theorem~1.)

The following factorization makes the variational conditions explicit for
any number of atoms.  For $N=n$ it recovers the critical point equation of
Agler and McCarthy \cite[Thm.~5.1, Props.~6.4 and 6.6]{AglerMcCarthy2021}.

\begin{theorem}\label{thm:kkt}
Let $f$ be an $M_n$-extremal function of the form~\eqref{eq:atomic} with
$N\le n$ atoms, $t=\sum_j\lambda_j$, and put $m=n-N$.  Then there are
$\sigma_1,\dots,\sigma_m\in\C$ with $0<\abs{\sigma_i}\le 1$ such that:
\begin{enumerate}
\item[(a)] for all real $\varphi$,
\begin{equation}\label{eq:kkt-h}
h(\varphi)=\frac{e^{-t}}{\prod_i\abs{\sigma_i}}\ \prod_{j=1}^N\abs{e^{i\varphi}-\alpha_j}^2
\ \prod_{i=1}^m\abs{1-\sigma_ie^{-i\varphi}}^2 ;
\end{equation}
\item[(b)] for $k=1,\dots,n$,
\begin{equation}\label{eq:kkt-power}
\sum_{j=1}^N\Bigl(\lambda_j-\frac1k\Bigr)\alpha_j^k
=\frac1{2k}\sum_{i=1}^m\bigl(\sigma_i^k+\bar\sigma_i^{-k}\bigr) ;
\end{equation}
\item[(c)] the phase of $a_n$ is
\begin{equation}\label{eq:kkt-phase}
\omega=\prod_{j=1}^N(-\alpha_j)\ \prod_{i=1}^m\frac{-\sigma_i}{\abs{\sigma_i}} .
\end{equation}
\end{enumerate}
In particular, if $N=n-1$, then $X:=\sum_j(\lambda_j-1)\alpha_j$ satisfies
$\abs X\ge1$,
\begin{equation}\label{eq:kkt-cheb}
    \begin{split}
        k\sum_{j=1}^{n-1}\Bigl(\lambda_j-\frac1k\Bigr)\alpha_j^k
        &= T_k(\abs X)\Bigl(\frac{X}{\abs X}\Bigr)^{k}\quad(k=2,\dots,n),
        \qquad\text{and}
        \\
        \omega &= (-1)^n\alpha_1\cdots\alpha_{n-1}\frac{X}{\abs X},
    \end{split}
\end{equation}
where $T_k$ is the Chebyshev polynomial of degree $k$.
\end{theorem}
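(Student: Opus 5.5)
The plan is to derive everything from Lemma~\ref{lem:var} and the Fej\'er--Riesz theorem. First observe that $h$ is a real trigonometric polynomial of degree exactly $n$. Writing $\zeta=e^{i\varphi}$, we have $h=\sum_{k=-n}^n c_k\zeta^k$ with $c_k=\bar\omega a_{n-k}$ for $1\le k\le n$, $c_{-k}=\bar c_k$ and $c_0=\Real(\bar\omega a_n)=\abs{a_n}$. Its top coefficient is $\bar\omega a_0=\bar\omega e^{-t}\neq0$.

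\textbf{Part (a).} By Lemma~\ref{lem:var}, $h\ge0$ and $h$ vanishes at the $N$ distinct points $\theta_j$. Each of these zeros is therefore of even order, so at least double. By Fej\'er--Riesz, $h=\abs{Q(\zeta)}^2$ for a polynomial $Q$ of degree $n$. Each zero of $Q$ may be reflected across $\T$ without changing $\abs Q$ on $\T$, up to a positive constant. The zeros on $\T$ must include the $\alpha_j$, so we can write $Q(\zeta)=c\prod_j(\zeta-\alpha_j)\prod_{i=1}^m(\zeta-\sigma_i)$ with all $\abs{\sigma_i}\le1$. No $\sigma_i$ vanishes: the constant coefficient of $\zeta^nh$ is $\bar c_n=\omega e^{-t}\ne0$, whereas a root $\sigma_i=0$ would make $\zeta^nh$ divisible by $\zeta$. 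Using $\abs{\zeta-\sigma}^2=\abs{1-\sigma\bar\zeta}^2$ on $\T$, we get $h=\abs c^2\prod_j\abs{\zeta-\alpha_j}^2\prod_i\abs{1-\sigma_i\bar\zeta}^2$. Expanding each factor as $\abs{\zeta-\alpha}^2=-\bar\alpha\zeta(1-\alpha\bar\zeta)^2\cdot(\dots)$, the coefficient of $\zeta^n$ on the right is $\abs c^2\prod_j(-\bar\alpha_j)\prod_i(-\bar\sigma_i)$. Equating this with $\bar\omega e^{-t}$ gives two things at once: the modulus gives $\abs c^2=e^{-t}/\prod\abs{\sigma_i}$, which is (a), and the phase gives (c).

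\textbf{Part (b).} Here I would pass to generating functions. Consider the polynomial $D(z)=z^n\sum_k \bar c_k z^{k}$ (degree $2n$, self-reciprocal). Its coefficients of $z^0,\dots,z^{n-1}$ are $\omega\bar\omega\cdots$, more precisely $\omega\,\overline{\bar\omega a_{n-k}}$ suitably conjugated; after conjugating the whole identity, the low-order part of the factorization reads
\[
D(z)\ \overset{n-1}{\sim}\ \bar\omega\, f(z)\cdot(\text{const}),
\]
while the factorization gives $D(z)=C\prod_j(1-\alpha_jz)^2\prod_i(1-\sigma_iz)(1-\bar\sigma_i^{-1}z)$ up to the sign and phase already fixed in (c). Both sides are nonvanishing at $0$, so by the composition rule in \S\ref{sec:notation} we may take logarithms and compare coefficients of $z^k$. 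Using \eqref{eq:logf} on one side and $\log(1-\beta z)=-\sum\beta^kz^k/k$ on the other, we obtain
\[
2\sum_j\lambda_j\alpha_j^k=\frac2k\sum_j\alpha_j^k+\frac1k\sum_i(\sigma_i^k+\bar\sigma_i^{-k})
\]
for $1\le k\le n-1$, which is \eqref{eq:kkt-power}.

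\textbf{Main obstacle.} I expect the difficulty to be the case $k=n$. The coefficient of $z^n$ in $D$ is the central coefficient $c_0=\abs{a_n}$, not $\bar\omega a_n$ times a constant; in particular it involves the conjugate half of $D$ as well. I would handle it by writing $\abs{a_n}=\bar\omega a_n$ and noting that in the log-expansion, the contributions from the reflected half (the high-order coefficients of $D$) enter the $z^n$ coefficient only through terms that cancel. Concretely, this should follow by comparing $D$ with $\bar\omega\,\cdot$ the truncation of $f$ to order $n$ plus the term $\bar c_0$ counted once; the self-reciprocity of $D$ is what makes this comparison consistent. This is where I would be most careful, and I would check it against the $N=n$ case of Agler--McCarthy.

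\textbf{The case $N=n-1$.} Here $m=1$. Write $\sigma=re^{i\psi}$ with $0<r\le1$. Then (b) with $k=1$ gives
\[
X=\tfrac12\bigl(\sigma+\bar\sigma^{-1}\bigr)=e^{i\psi}\,\tfrac12\bigl(r+r^{-1}\bigr),
\]
so $\abs X=\tfrac12(r+r^{-1})\ge1$ and $X/\abs X=e^{i\psi}$. For general $k$, $\sigma^k+\bar\sigma^{-k}=e^{ik\psi}(r^k+r^{-k})=2T_k(\abs X)e^{ik\psi}$, by the identity $T_k\bigl(\tfrac12(r+r^{-1})\bigr)=\tfrac12(r^k+r^{-k})$. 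Substituting into (b) gives \eqref{eq:kkt-cheb}. Finally, (c) with $-\sigma/\abs\sigma=-X/\abs X$ gives the formula $\omega=(-1)^n\alpha_1\cdots\alpha_{n-1}X/\abs X$.
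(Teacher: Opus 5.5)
Your proposal is correct and is essentially the paper's proof. The steps match: Fej\'er--Riesz for $h$, then identifying the polynomial $D(z)=z^n\sum_k\bar c_kz^k$ (exactly the paper's $rr^*$) with $\bar\omega f$ up to order $n$, then comparing logarithms. Reading off (a) and (c) from the top coefficient $c_n=\bar\omega e^{-t}$ uses the same identity the paper gets by evaluating \eqref{eq:kkt-product} at $z=0$.

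The one thing to correct is your ``main obstacle'', which does not exist. By the definition $\omega=a_n/\abs{a_n}$ we have $c_0=\abs{a_n}=\bar\omega a_n$ exactly, contrary to your claim that it is ``not $\bar\omega a_n$''. So the coefficient of $z^l$ in $D$ is $c_{n-l}=\bar\omega a_l$ for every $0\le l\le n$, that is, $D\eqn\bar\omega f$ with constant $1$. The logarithmic comparison then gives \eqref{eq:kkt-power} for all $k=1,\dots,n$ at once, with no special treatment of $k=n$.

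You should drop the discussion of contributions from the ``reflected half'' cancelling. The first $n$ Taylor coefficients of $\log D$ depend only on the coefficients of $D$ through $z^n$, so the coefficients beyond $z^n$ never enter.
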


\begin{proof}[Proof of Theorem~\ref{thm:kkt}]
\emph{Step 1: factorization of $h$.}  $h$ is a real trigonometric polynomial
of degree at most $n$.  It is nonnegative, so each of its zeros $\theta_j$ has
even order.  Hence $h(\varphi)=\prod_j\abs{e^{i\varphi}-\alpha_j}^2\,w(\varphi)$,
where $w$ is a nonnegative trigonometric polynomial of degree at most $m$.
By the Fej\'er--Riesz theorem there is a polynomial $v$ of degree at most
$m$, with no zeros in $\D$, such that $w=\abs{v(e^{i\varphi})}^2$.  Thus
$h=\abs{q(e^{i\varphi})}^2$ with $q(\zeta)=\prod_j(\zeta-\alpha_j)\,v(\zeta)\in
\PP_n$.

\emph{Step 2: coefficient matching.}  On $\T$, $\bar\omega P(\zeta)=\bar\omega
a_n+2\sum_{k\ge1}\bar\omega a_{n-k}\zeta^k$, so the Fourier coefficients of $h$
are $\hat h(0)=\Real(\bar\omega a_n)=\abs{a_n}= \bar\omega a_n$ and $\hat
h(k)=\bar\omega a_{n-k}$ for $1\le k\le n$.  On the other hand, $\hat
h(k)=\sum_iq_{i+k}\bar q_i$ for $h=\abs q^2$.  Let $r(z)=\overline{q(\bar z)}$
be the polynomial with conjugated coefficients. Then
$\widehat{rr^*}(n-k)=\sum_i\bar q_iq_{i+k}=\hat h(k)$, where $r^*$ is the
reflection in degree $n$.  Therefore
\begin{equation}\label{eq:kkt-product}
    f \eqn \omega\,r\,r^*=\omega\,c_\alpha\prod_{j=1}^N(1-\alpha_jz)^2\,u\,u^{*m},
\end{equation}
where $c_\alpha=\prod_{j=1}^N(-\bar\alpha_j)$ and $u(z)=\overline{v(\bar z)}$,
since $r(z)=\prod_j(z-\bar\alpha_j)\,u(z)=c_\alpha\prod_j(1-\alpha_jz)\,u(z)$
and $r^*(z)=\prod_j(1-\alpha_jz)\,u^{*m}(z)$.

\emph{Step 3: the atoms of $u$.}  $u$ has no zeros in $\D$.  Evaluating
\eqref{eq:kkt-product} at $0$, we get
$e^{-t}=\omega\,c_\alpha\,u(0)\,u^{*m}(0)\ne0$. So $u(0)\ne0$ and
$u^{*m}(0)\ne0$, i.e.\ $u$ has exact degree $m$.  Write
$u(z)=u(0)\prod_{i=1}^m(1-\sigma_iz)$.  Then $0<\abs{\sigma_i}\le1$ and
$u^{*m}(z)=\overline{u(0)}\prod_i(z-\bar\sigma_i)$, so
\[
e^{-t}=\omega\prod_j(-\bar\alpha_j)\,\abs{u(0)}^2\prod_i(-\bar\sigma_i) .
\]
The phase gives~\eqref{eq:kkt-phase}, while the modulus gives
$\abs{u(0)}^2=e^{-t}/\prod_i\abs{\sigma_i}$.  Since
$\abs{v(e^{i\varphi})}=\abs{u(e^{-i\varphi})}=\abs{u(0)}\prod_i
\abs{1-\sigma_ie^{-i\varphi}}$, this is~\eqref{eq:kkt-h}.

\emph{Step 4: logarithms.}  Taking logarithms in~\eqref{eq:kkt-product}
(Section~\ref{sec:notation}), with $u^{*m}(z)=c'\prod_i(1-z/\bar\sigma_i)$, and comparing the
coefficients of $z^k$, $1\le k\le n$, using~\eqref{eq:logf} and
$\log(1-wz)=-\sum_kw^kz^k/k$, gives
\[
-2\sum_j\lambda_j\alpha_j^k=-\frac2k\sum_j\alpha_j^k-\frac1k\sum_i
\bigl(\sigma_i^k+\bar\sigma_i^{-k}\bigr),
\]
which is~\eqref{eq:kkt-power}.

\emph{Step 5: $N=n-1$.}  Write $\sigma_1=e^{-s+i\psi}$ with $s\ge0$.  The case
$k=1$ of~\eqref{eq:kkt-power} gives $X=\frac12(\sigma_1+\bar\sigma_1^{-1})=
\cosh(s)e^{i\psi}$, so $\abs X=\cosh s\ge1$ and $X/\abs X=\sigma_1/\abs{\sigma_1}$.
For $k\ge2$ the right side of~\eqref{eq:kkt-power}, multiplied by $k$, is
$\cosh(ks)e^{ik\psi}=T_k(\abs X)(X/\abs X)^k$.  The phase relation follows from
\eqref{eq:kkt-phase} with $N=n-1$.
\end{proof}

For $N=n-1$ the relations~\eqref{eq:kkt-cheb} are $n-1$ complex equations and
one real equation between the $2n-3$ real parameters of the configuration
modulo rotation.  The search uses the Chebyshev relations; adding the phase
relation gave little improvement in preliminary experiments.

\begin{remark}\label{rem:kkt}
\begin{enumerate}
\item \emph{Full degree.}  For $N=n$ we have $m=0$.  Then
\eqref{eq:kkt-product} is the critical point equation
\cite[Thm.~5.1]{AglerMcCarthy2021}, and~\eqref{eq:kkt-power} reads
$\sum_j\lambda_j\alpha_j^k=\frac1k\sum_j\alpha_j^k$ \cite[Prop.~6.6]{AglerMcCarthy2021}.
Equation~\eqref{eq:kkt-phase} gives $\omega=\prod_j(-\alpha_j)$,
which is the normalization in \cite[Def.~6.1, Lemma~6.3]{AglerMcCarthy2021}.
Finally~\eqref{eq:kkt-h} becomes $h(\varphi)=e^{-t}\prod_j\abs{e^{i\varphi}-
\alpha_j}^2$; for $f_n$ this is $(2/e)(1+\cos n\varphi)$.
\item \emph{Completeness.}  The conditions of Lemma~\ref{lem:var} are the
complete first-order conditions for maximizing $\abs{a_n}$ over the measures.
Let $\mu=\sum_j\lambda_j\delta_{\alpha_j}$ be the measure of $f$, and let $\nu$
be any finite positive measure on $\T$.  The Herglotz data depend linearly on
the measure, and $a_n$ depends analytically on these data, so the computation in the
proof of Lemma~\ref{lem:var} shows that the right derivative of $\abs{a_n}$ at
$s=0$ along $\mu+s(\nu-\mu)$ is
\[
-\int h\,d\nu+\int h\,d\mu ,
\]
where $h(\varphi)$ is evaluated at $e^{i\varphi}$.  Adding an imaginary
constant to $g$ only changes the unimodular factor, and does not change
$\abs{a_n}$.  If $h\ge0$ and $h=0$ on the support of $\mu$, the derivative is
$-\int h\,d\nu\le0$ for every $\nu$; conversely, $\nu=\mu+\delta_\zeta$ gives
$h\ge0$, and $\nu=\mu-\lambda_j\delta_{\alpha_j}$ gives $h(\theta_j)\le0$.  So
Lemma~\ref{lem:var} says precisely that $\abs{a_n}$ does not increase to first
order in any feasible direction, and by continuity in any direction of the
tangent cone, the closure of the cone generated by the $\nu-\mu$.  Steps~2--4 can be
reversed, so~\eqref{eq:kkt-power} and~\eqref{eq:kkt-phase} together are
equivalent to Lemma~\ref{lem:var}.  In particular they contain all first-order
information.  The logarithmic relations~\eqref{eq:kkt-power} alone lose one real
equation, which the phase relation restores.  Equivalently, one can argue with
the normal cone of the spectrahedron $\mathcal C$ as in
\cite{AglerMcCarthy2021}, see \cite[Thm.~23.9]{Rockafellar1970}.
\item \emph{A priori bound.}  Taking moduli in~\eqref{eq:kkt-cheb} gives
$T_k(\abs X)=k\abs{Y_k}$ with $Y_k=\sum_j(\lambda_j-1/k)\alpha_j^k$.  Since
$\abs{Y_k}\le t+N/k$, for $n=6$ and $t\le7$ we get $\cosh 6s=T_6(\abs X)\le
6t+5\le47$ in the notation of Step~5.  Hence $1\le\abs X\le1.301$ and
$\abs{\sigma_1}=e^{-s}\ge94^{-1/6}>0.468$.  By~\eqref{eq:kkt-h}, $h$ has a zero on $\T$
besides the atoms iff $\abs{\sigma_1}=1$.
\end{enumerate}
\end{remark}

\section{Bounds for the total mass}\label{sec:mass}

\begin{lemma}\label{lem:tlo}
If $\phi\colon\D\to\D$ is holomorphic, then $\abs{a_n(\phi)}\le1-\abs{a_0(\phi)}^2$
for $n\ge1$.  Consequently, if $f\in\Bzero$ has $\abs{f(0)}=e^{-t}$ with
$t<T_{\mathrm{lo}}:=-\frac12\log(1-2/e)=0.665446\ldots$, then $\abs{a_n(f)}<2/e$.
\end{lemma}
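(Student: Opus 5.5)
The first claim is the classical Wiener-type coefficient bound for self-maps of the disk. We would prove it by first reducing to $n=1$ via the standard averaging trick. Let $\varepsilon=e^{2\pi i/n}$ and put
\[
\psi(z)=\frac1n\sum_{l=0}^{n-1}\phi(\varepsilon^l z).
\]
This function kills every Taylor coefficient whose index is not a multiple of $n$, and it maps $\D$ into $\D$, since $\D$ is convex and each $\phi(\varepsilon^l z)\in\D$. So $\psi(z)=\Phi(z^n)$ for a holomorphic $\Phi\colon\D\to\D$ with $\Phi(0)=a_0(\phi)$ and $\Phi'(0)=a_n(\phi)$. (That $\Phi$ is holomorphic and bounded by $1$ is clear, because $\psi$ is a power series in $z^n$.)

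It therefore suffices to prove $\abs{\Phi'(0)}\le1-\abs{\Phi(0)}^2$. This is the Schwarz--Pick lemma at the origin. To see it directly, compose with the disk automorphism $\tau(w)=(w-b)/(1-\bar bw)$, where $b=\Phi(0)$. Then $\tau\circ\Phi$ fixes $0$, so Schwarz gives $\abs{(\tau\circ\Phi)'(0)}\le1$. Since $\tau'(b)=1/(1-\abs b^2)$, this yields $\abs{\Phi'(0)}\le1-\abs b^2$. The case $\abs{a_0}=1$ cannot occur for a map into the open disk, so no degenerate case arises.

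For the consequence, let $f\in\Bzero$ with $\abs{f(0)}=e^{-t}$. Since $f$ maps $\D$ into $\D$, the first part gives
\[
\abs{a_n(f)}\le1-e^{-2t}.
\]
The right side is strictly increasing in $t$, and $1-e^{-2t}<2/e$ exactly when $e^{-2t}>1-2/e$, i.e. $t<-\tfrac12\log(1-2/e)=T_{\mathrm{lo}}$. The numerical value $0.665446\ldots$ is just evaluation of this expression.

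There is no real obstacle here. The only point needing a moment's care is that the averaged function $\psi$ still maps into the open disk, which is the convexity remark above, and that it really is a function of $z^n$.
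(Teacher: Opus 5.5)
Your proposal is correct and is essentially the paper's own proof: the paper also averages $\phi$ over the $n$th roots of unity to obtain $\psi(z^n)$ with $\psi(0)=a_0$ and $\psi'(0)=a_n$, and then applies Schwarz--Pick (F.~Wiener's argument). You additionally write out the Schwarz--Pick step and the algebra $1-e^{-2t}<2/e \iff t<T_{\mathrm{lo}}$, both of which are correct.
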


\begin{proof}
The average $\frac1n\sum_{\ell=0}^{n-1}\phi(e^{2\pi i\ell/n}z)$ maps $\D$ into
$\D$ and equals $\psi(z^n)$ for a holomorphic $\psi\colon\D\to\D$ with
$\psi(0)=a_0$ and $\psi'(0)=a_n$.  Apply the Schwarz--Pick lemma to $\psi$.  This is F.~Wiener's argument,
see \cite[\S1.7]{MSUV2015}.
\end{proof}

An upper bound for the mass follows from a convexity argument that applies
to every function in $\Bzero$, without any assumption on its atoms.

\begin{proposition}\label{prop:poisson}
Let $f\in\Bzero$ with $\abs{f(0)}=e^{-t}$.  For every $r\in(0,1)$,
\[
\abs{a_n(f)}\le r^{-n}\,\frac1{2\pi}\int_0^{2\pi}\exp\bigl(-t\,P_r(\theta)\bigr)\,d\theta,
\]
where $P_r(\theta)=\frac{1-r^2}{1-2r\cos\theta+r^2}$ is the Poisson kernel.
In particular, $\abs{a_6(f)}<2/e$ whenever $t\ge 6.989$.
\end{proposition}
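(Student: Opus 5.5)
We plan to bound $a_n$ through the Cauchy formula on the circle of radius $r$ and then estimate $\abs{f}$ there with Jensen's inequality applied to the Herglotz measure. We write $f=\zeta e^{-g}$ with $g\in\HH$ and $\abs{\zeta}=1$. Then $\Real g$ is the Poisson integral of a positive measure $\mu$ on $\T$ with total mass $\mu(\T)=\Real g(0)=t$, up to harmless normalization. Hence $\abs{f(re^{i\theta})}=\exp\bigl(-\int P_r(\theta-\varphi)\,d\mu(\varphi)\bigr)$, with $d\mu$ normalized so that $\int d\mu=t$ when $P_r$ has mean one. Cauchy's estimate gives
\[
\abs{a_n(f)}\le r^{-n}\frac1{2\pi}\int_0^{2\pi}\abs{f(re^{i\theta})}\,d\theta
= r^{-n}\frac1{2\pi}\int_0^{2\pi}\exp\Bigl(-\int P_r(\theta-\varphi)\,d\mu(\varphi)\Bigr)d\theta .
\]

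The key step is convexity. Write $\mu=t\,\nu$ with $\nu$ a probability measure. The function $x\mapsto e^{-tx}$ is convex, so by Jensen, for each $\theta$,
\[
\exp\Bigl(-t\int P_r(\theta-\varphi)\,d\nu(\varphi)\Bigr)\le\int \exp\bigl(-tP_r(\theta-\varphi)\bigr)\,d\nu(\varphi).
\]
We integrate in $\theta$ and use Fubini together with rotation invariance of $d\theta$. The inner $\theta$-integral no longer depends on $\varphi$, so the right side collapses to $\frac1{2\pi}\int_0^{2\pi}\exp(-tP_r(\theta))\,d\theta$. This gives the inequality of the proposition for every $r\in(0,1)$.

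For the numerical consequence, we first note that the integrand $\exp(-tP_r)$ is decreasing in $t$, since $P_r>0$. It therefore suffices to treat $t=6.989$ and exhibit one $r$ for which $r^{-6}\frac1{2\pi}\int\exp(-6.989P_r)\,d\theta<2/e$; the inequality then persists for all larger $t$. We would pick $r$ by a one-dimensional numerical optimization, presumably with $r$ fairly close to $1$, so that $P_r$ is large except on an arc of size $O(1-r)$ around $\theta=\pi$, where $P_r\approx(1-r)/(1+r)$ is small. Making this rigorous means certifying an upper bound for a smooth one-dimensional integral with interval arithmetic, for example by subdividing $[0,\pi]$ using symmetry, bounding the monotone integrand on each subinterval by its endpoint values, and summing.

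We expect the main obstacle to be this rigorous numerical certification, and the fact that the threshold $6.989$ sits presumably very close to the optimum over $r$. The certified enclosure must therefore be tight. We would exploit that $P_r(\theta)$ is monotone in $\theta$ on $[0,\pi]$, so that a fine partition with exact monotone endpoint bounds (upper Riemann sums) suffices, and we would refine the partition near $\theta=\pi$, where the integrand carries almost all its mass.
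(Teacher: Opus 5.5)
Your proposal is correct and follows the paper's proof essentially step by step: the Cauchy estimate on $\abs{z}=r$, Jensen's inequality for $\nu=\mu/t$, Fubini with rotation invariance, monotonicity in $t$, and an upper Riemann sum that uses that $P_r$ is even and decreasing on $[0,\pi]$. The one thing you leave open is the numerical certificate itself, which the paper supplies: for $r=0.9161$ and $t=6.989$ it certifies, in ball arithmetic, that the right-hand side is at most $0.735487<2/e$.
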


\begin{proof}[Proof of Proposition~\ref{prop:poisson}]
Write $f=\zeta\exp(-\int_\T\frac{\xi+z}{\xi-z}\,d\mu(\xi))$ with $\mu\ge0$ of
mass $t$.  Then $\abs{f(re^{i\theta})}=\exp(-\int P_r(\theta-\arg\xi)\,
d\mu(\xi))$.  By Cauchy's estimate,
\[
\abs{a_n}\le r^{-n}\frac1{2\pi}\int_0^{2\pi}\abs{f(re^{i\theta})}\,d\theta .
\]
With $\nu=\mu/t$, Jensen's inequality for the probability measure $\nu$ gives
\[
\exp\Bigl(-\int tP_r(\theta-\arg\xi)\,d\nu(\xi)\Bigr)
\le\int\exp\bigl(-tP_r(\theta-\arg\xi)\bigr)\,d\nu(\xi).
\]
Integrate over $\theta$ and use Fubini: the inner integral does not depend on
$\xi$.  The right-hand side of the proposition is decreasing in $t$.  For $n=6$,
$r=0.9161$ and $t=6.989$ it is at most $0.735487<2/e=0.735759$; this was
certified in ball arithmetic with rigorous Riemann sums, using that $P_r$ is
even and decreasing on $[0,\pi]$.
\end{proof}

\begin{table}[ht]
\centering
\begin{tabular}{ccccc}
\toprule
$n$ & $T_{\mathrm{hi}}(n)$ used & certified at $t$ & $r$ & certified upper bound \\
\midrule
3 & 3.54 & 3.539 & 0.8456 & 0.734882 \\
4 & 4.69 & 4.689 & 0.8790 & 0.734844 \\
5 & 5.84 & 5.839 & 0.9009 & 0.735144 \\
6 & 6.99 & 6.989 & 0.9161 & 0.735487 \\
\bottomrule
\end{tabular}
\medskip
\caption{Mass cutoffs from Proposition~\ref{prop:poisson}: for $t\ge
T_{\mathrm{hi}}(n)-0.001$ every $f\in\Bzero$ has $\abs{a_n}<2/e=0.7357589$.
The margin of $0.001$ covers the rounding of the decimal cutoff to a double.}
\label{tab:thi}
\end{table}

\begin{remark}
The cutoff $T_{\mathrm{hi}}(n)$ grows roughly like $1.17n$ for $3\le n\le6$,
with $r$ optimized numerically for each $n$.  In the search a box-wise version of the same bound is
used (test P below), with the actual atom positions.
\end{remark}

\section{The branch-and-bound}\label{sec:bnb}

We describe the compact parameter domains, the tests used to exclude boxes,
and the search that applies them.  Proposition~\ref{prop:correct} connects
the outcome of the search to the full-degree property.

\subsection{Reduction to a compact domain}
Fix $n$ and $1\le N\le n-1$.  A configuration of $N$ atoms consists of masses
$\lambda_j$ and points $\alpha_j\in\T$.  After a rotation
(Section~\ref{sec:sym}) we may take $\alpha_1=1$, and we parametrize
configurations by
\[
x=(\lambda_1,\dots,\lambda_N,\theta_2,\dots,\theta_N)\in\R^{2N-1},\qquad
\alpha_1=1,\ \alpha_j=e^{i\theta_j}.
\]
As in Section~\ref{sec:mass}, $t=\sum_j\lambda_j$ denotes the total mass.  The
symmetries of Section~\ref{sec:sym} let us assume that the atom at $1$ is a
heaviest one, that the other atoms are ordered by their arguments, and, after
a reflection, that $\theta_2+\theta_N\le2\pi$.  By Lemma~\ref{lem:tlo} and
Proposition~\ref{prop:poisson}, an extremal has $T_{\mathrm{lo}}\le t\le
T_{\mathrm{hi}}(n)$, with the cutoffs of Table~\ref{tab:thi}.  Accordingly, let
$D_N$ be the set of $x$ with
\begin{equation}\label{eq:fd}
\begin{gathered}
\lambda_j\ge0\ (j\ge1),\quad\lambda_1\ge\lambda_j\ (j\ge2),\quad
T_{\mathrm{lo}}\le t\le T_{\mathrm{hi}}(n),\\
0\le\theta_2\le\dots\le\theta_N\le2\pi,\quad \theta_2+\theta_N\le2\pi,
\end{gathered}
\end{equation}
where the conditions on the angles are void for $N=1$.  We allow vanishing
masses, so that $D_N$ is closed.  For $N\ge2$, $D_N$ is
contained in the root box
$R_N=[0,T]\times[0,T/2]^{N-1}\times[0,\pi]\times[0,2\pi]^{N-2}$,
$T=T_{\mathrm{hi}}(n)$, because $0\le\lambda_j\le\min(\lambda_1,t-\lambda_1)\le
t/2$ for $j\ge2$ and $2\theta_2\le\theta_2+\theta_N\le2\pi$; for $N=1$ we put
$R_1=[0,T]$.

\begin{lemma}\label{lem:fd}
If there is an $M_n$-extremal function with exactly $N$ atoms, $N<n$, then
there is one whose parameter $x$ lies in $D_N$, has $\abs{a_n(x)}\ge2/e$, and,
if $N=n-1$, satisfies~\eqref{eq:kkt-cheb} and $\abs X\ge1$.
\end{lemma}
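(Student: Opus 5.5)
Starting from an $M_n$-extremal function $f$ with exactly $N<n$ atoms, the plan is to move it into the normal form \eqref{eq:atomic} and then use the symmetries of Section~\ref{sec:sym} to bring its parameters into $D_N$. All other claimed properties will be checked to survive these operations. By Section~\ref{sec:prelim}, multiplying $f$ by a unimodular constant gives $f=\exp(-\sum_j\lambda_j\frac{1+\alpha_jz}{1-\alpha_jz})$ with distinct $\alpha_j\in\T$ and $\lambda_j>0$. This does not change $\abs{a_n}$ or the number of atoms, so the new function is still extremal. Since $M_n\ge 2/e$ (shown by $f_n$), extremality gives $\abs{a_n}=M_n\ge2/e$.

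Next comes the normalization. Relabel the atoms so that $\lambda_1$ is maximal, and rotate by $\varepsilon=\bar\alpha_1$ so that $\alpha_1=1$. Then write $\alpha_j=e^{i\theta_j}$ for $j\ge2$ with $\theta_j\in(0,2\pi)$; these lie in the open interval because the atoms are distinct. Relabel the atoms $j\ge2$ so that the $\theta_j$ increase. If $\theta_2+\theta_N>2\pi$, apply the reflection $f\mapsto\overline{f(\bar z)}$. This sends $\theta_j\mapsto2\pi-\theta_j$, keeps $\alpha_1=1$ and the masses, and reverses the order; after relabelling, the new sum of the extreme angles is $4\pi-(\theta_2+\theta_N)<2\pi$. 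Rotations and reflection preserve $\Bzero$ and $\abs{a_n}$, and they are bijective on $\Bzero$, so extremality is preserved. The mass bounds $T_{\mathrm{lo}}\le t\le T_{\mathrm{hi}}(n)$ follow from Lemma~\ref{lem:tlo} and Proposition~\ref{prop:poisson}: if $t$ lay outside this range we would have $\abs{a_n}<2/e\le M_n$, contradicting extremality. Here $\abs{f(0)}=e^{-t}$, and Table~\ref{tab:thi} is used for the upper cutoff. Hence $x\in D_N$.

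Finally, when $N=n-1$, apply Theorem~\ref{thm:kkt} to the normalized extremal. It has the form \eqref{eq:atomic} with $N=n-1$ atoms, so \eqref{eq:kkt-cheb} holds together with $\abs X\ge1$, where $X$ is computed from the new parameters. These relations are stated for whatever configuration the extremal has, so there is no need to track how they transform.

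The main point needing care is that every operation keeps the function $M_n$-extremal with exactly $N$ atoms. Only then can Theorem~\ref{thm:kkt} be applied after normalizing, rather than having to transport the relations by hand.
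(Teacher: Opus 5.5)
Your proposal is correct and follows the paper's own proof. You rotate a heaviest atom to $1$, order the remaining atoms by argument, reflect if $\theta_2+\theta_N>2\pi$, and apply Lemma~\ref{lem:tlo}, Proposition~\ref{prop:poisson} and Theorem~\ref{thm:kkt} to the normalized function, which is still $M_n$-extremal with $\abs{a_n}=M_n\ge 2/e$; your explicit checks (the new angle sum $4\pi-(\theta_2+\theta_N)<2\pi$, and that extremality and the number of atoms are preserved) only spell out what the paper's short proof leaves implicit.
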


\begin{proof}
Rotate so that a heaviest atom sits at $1$, and label the other atoms by
increasing argument in $(0,2\pi)$.  If $\theta_2+\theta_N>2\pi$, apply the
reflection $\alpha\mapsto\bar\alpha$ and relabel.  This keeps the heaviest atom
at $1$ and replaces $(\theta_2,\theta_N)$ by $(2\pi-\theta_N,2\pi-\theta_2)$.
The new function is again $M_n$-extremal, with $\abs{a_n}=M_n\ge 2/e$, so
Lemma~\ref{lem:tlo}, Proposition~\ref{prop:poisson} and Theorem~\ref{thm:kkt}
apply to it.
\end{proof}

\subsection{The tests}
The computation proves that no $x\in D_N$ satisfies the conditions of
Lemma~\ref{lem:fd}.  It maintains a set of boxes covering $R_N$ and bisects
boxes until each is \emph{closed} by one of the following tests.  Each test is
valid on the whole closed box, including degenerate points with a vanishing
mass or coinciding angles.
\begin{description}
\item[T] $\sum_j\sup\lambda_j<T_{\mathrm{lo}}$ or $\sum_j\inf\lambda_j>
T_{\mathrm{hi}}(n)$.
\item[S] The box misses $D_N$: $\sup\lambda_1<\inf\lambda_j$ for some $j$, or
$\inf\theta_j>\sup\theta_{j+1}$ for some $j$, or $\inf\theta_2+\inf\theta_N>2\pi$.
\item[P] A box-wise version of Proposition~\ref{prop:poisson}: for some
$r\in(0,1)$, an upper Riemann sum for
$r^{-n}\frac1{2\pi}\int\exp(-\sum_j\lambda_jP_r(\theta-\theta_j))\,d\theta$ is
$<2/e$.  On each cell of the Riemann sum, $\lambda_j$ is replaced by its lower
bound on the box, and $P_r(\theta-\theta_j)$ by its minimum over the cell and
the $\theta_j$-interval.
\item[A] (only $N=n-1$) The relations of Theorem~\ref{thm:kkt} fail on the
whole box, in one of three ways.  With $Y_k=\sum_j(\lambda_j-1/k)\alpha_j^k$:
\begin{itemize}
\item A0: $\sup\abs X<1$;
\item A1: $\xi:=\inf\abs X>1$ and $\frac12\bigl(\xi+\sqrt{\xi^2-1}\bigr)^k>k\sup
\abs{Y_k}$ for some $k\in\{2,\dots,n\}$.  The left side is a lower bound for
$T_k(\xi)=\cosh(k\operatorname{arcosh}\xi)$, and $T_k$ is increasing on
$[1,\infty)$;
\item A2: $\inf\abs{Y_k-\phi_k(X)/k}>0$ for some $k\in\{2,\dots,n\}$, where
\[ \phi_k(X)=\sum_l c_{k,l}X^{k-l}\bar X^{-l} \]
and $T_k(y)=\sum_lc_{k,l}y^{k-2l}$, so that
$\phi_k(X)=T_k(\abs X)(X/\abs X)^k$.
\end{itemize}
\item[V] $\sup\abs{a_n}<2/e$ on the box.
\end{description}
The tests are applied in this order.  A box that reaches a maximal depth, or a
minimal width, without being closed is reported as undecided; no box was
undecided in any of the runs.

\begin{proposition}\label{prop:correct}
If for some $N<n$ the search closes every box of a cover of $R_N$ with the tests
T, S, P, A, V, then no $M_n$-extremal function has exactly $N$ atoms.
\end{proposition}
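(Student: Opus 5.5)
The argument goes by contraposition, resting on Lemma~\ref{lem:fd}. Suppose some $M_n$-extremal function has exactly $N<n$ atoms. By Lemma~\ref{lem:fd} there is then a parameter $x^\ast\in D_N$ with $\abs{a_n(x^\ast)}\ge2/e$. If $N=n-1$, this $x^\ast$ also satisfies~\eqref{eq:kkt-cheb} and $\abs X\ge1$. Moreover, $x^\ast$ is an honest extremal with $N$ atoms, so all masses are positive, the angles are distinct, and $T_{\mathrm{lo}}\le t\le T_{\mathrm{hi}}(n)$ by Lemma~\ref{lem:tlo} and Proposition~\ref{prop:poisson}. We saw that $D_N\subset R_N$, so $x^\ast$ lies in some box $B$ of the cover. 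I will show that each test is impossible for $B$ when $x^\ast\in B$. This contradicts the assumption that $B$ was closed.

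The tests are handled one at a time, using only that $x^\ast\in B$.

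\emph{Test~T.} If it closes $B$, then $t(x^\ast)\le\sum\sup\lambda_j<T_{\mathrm{lo}}$, or the analogous inequality with $T_{\mathrm{hi}}$ holds. Either contradicts the mass bounds.

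\emph{Test~S.} Each of its conditions says $B\cap D_N=\emptyset$. For example, $\inf\theta_j>\sup\theta_{j+1}$ would force $\theta_j>\theta_{j+1}$ at $x^\ast$. This contradicts $x^\ast\in D_N$.

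\emph{Test~P.} Here I redo the proof of Proposition~\ref{prop:poisson} directly at $x^\ast$, without Jensen's inequality. By Cauchy's estimate,
\[
\abs{a_n}\le r^{-n}\frac1{2\pi}\int_0^{2\pi}\exp\Bigl(-\sum_j\lambda_jP_r(\theta-\theta_j)\Bigr)\,d\theta,
\]
and $\abs{f(re^{i\theta})}$ is exactly this integrand, with $\theta_1=0$. On each cell the integrand is bounded by the value obtained with $\lambda_j$ replaced by $\inf\lambda_j\ge0$ and $P_r$ replaced by its minimum over the cell and the $\theta_j$-interval. This uses $P_r>0$, so that exponential of a negative quantity is monotone. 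Hence the upper Riemann sum bounds $\abs{a_n(x^\ast)}$, and test~P gives $\abs{a_n(x^\ast)}<2/e$, a contradiction. Rigor of the floating-point evaluation is delegated to the enclosure arithmetic.

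\emph{Test~V.} Directly, $\abs{a_n(x^\ast)}\le\sup_B\abs{a_n}<2/e$, computed via~\eqref{eq:product}.

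\emph{Test~A} (only for $N=n-1$). Test~A0 contradicts $\abs X\ge1$. For A1: if $\xi=\inf\abs X>1$, then by monotonicity of $T_k$ on $[1,\infty)$ we have $T_k(\abs{X(x^\ast)})\ge T_k(\xi)\ge\frac12(\xi+\sqrt{\xi^2-1})^k$. Taking moduli in~\eqref{eq:kkt-cheb} gives $T_k(\abs X)=k\abs{Y_k}\le k\sup\abs{Y_k}$, which is a contradiction. For A2: the identity $\phi_k(X)=T_k(\abs X)(X/\abs X)^k$ turns~\eqref{eq:kkt-cheb} into $Y_k=\phi_k(X)/k$ at $x^\ast$. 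This contradicts $\inf\abs{Y_k-\phi_k(X)/k}>0$.

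The only delicate point is that the tests are evaluated on closed boxes that contain degenerate points, with vanishing masses or coinciding angles. We need the bounds computed by interval arithmetic to be genuine enclosures of the quantities at $x^\ast$. Two facts settle this. First, formula~\eqref{eq:product} holds for all $\lambda_j\ge0$; it extends continuously to $\lambda_j=0$ since $b_k(0)=\delta_{k0}$. Second, $X$ and $Y_k$ are polynomial in the parameters. Since $x^\ast$ is a point of $B$, any valid enclosure over $B$ covers it, and no interior or nondegeneracy assumption is needed. I expect the main obstacle is not mathematical but ensuring the enclosures are sound, which I would take as given from \cite{Enclosures}.
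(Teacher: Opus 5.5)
Your proof is correct and follows the paper's argument: place the point from Lemma~\ref{lem:fd} in a box of the cover, then check test by test that none of T, S, P, A, V can close that box. Your per-test checks, such as the bound $T_k(\xi)\ge\frac12(\xi+\sqrt{\xi^2-1})^k$ for A1 and the monotonicity argument for P, simply expand the paper's one-line justifications; the only slip is cosmetic: with $\alpha_j=e^{i\theta_j}$ one has $\abs{f(re^{i\theta})}=\exp\bigl(-\sum_j\lambda_jP_r(\theta+\theta_j)\bigr)$, which is the test-P integrand at $-\theta$ rather than at $\theta$, and this is harmless because $P_r$ is even, so the integrals over a period agree.
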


\begin{proof}
Let $x$ be as in Lemma~\ref{lem:fd}, and let $B$ be a box of the cover
containing $x$.  Then $x\in D_N$, so $B$ was not closed by S.  Also $T_{\mathrm{lo}}\le
t(x)\le T_{\mathrm{hi}}$, so it was not closed by T.  $\abs{a_n(x)}\ge2/e$
excludes P and V (test P bounds the Cauchy integral in the proof of
Proposition~\ref{prop:poisson}, which is at least $\abs{a_n(x)}$ at every
point of $B$).  Finally, $x$ satisfies the relations of
Theorem~\ref{thm:kkt}, which excludes A.  So no test closes $B$, a
contradiction.
\end{proof}

\subsection{Enclosures}\label{sec:tm}
Tests A and V need upper bounds for $\abs{a_n}$ and lower bounds for
$\abs{Y_k-\phi_k(X)/k}$ over a box.  On a box with centre $c$ and radii $\rho$
we write $x=c+\rho\odot\delta$ with $\delta\in[-1,1]^{2N-1}$, and represent
each quantity by a complex \emph{Taylor model} of degree two
\cite{MakinoBerz2003}: a quadratic polynomial $p(\delta)$ together with a bound
$\epsilon$ for $\sup_\delta\abs{\phi-p}$.  The polynomial parts are added
coefficientwise and multiplied as polynomials.  Products are truncated at
degree two, with the discarded terms bounded and included in the remainder.

For each atom, $b_k(\lambda_j)$ and $\alpha_j^k=e^{ik\theta_j}$ are replaced
by their second-order Taylor polynomials at the centre of the box, with
remainders at most $C_k\rho^3/6$ and $(k\rho')^3/6$, where $\rho$ and $\rho'$
are the radii of $\lambda_j$ and $\theta_j$ and $C_k\ge\sup\abs{b_k'''}$ are
certified constants.  The Taylor model of $a_n$ is then obtained by multiplying
out~\eqref{eq:product}; the recursion that follows from~\eqref{eq:logf} cancels
heavily and gives much worse bounds.  The quantities $X$ and $Y_k$ are linear
in the masses and in the $\alpha_j^k$, and $1/\bar X$ is obtained from a short
Neumann series.  Finally, the modulus of a Taylor model is bounded from its
coefficients, with a verified eigenvalue bound \cite{Rump2006} for the
quadratic part when that is sharper than the entrywise bound.

All arithmetic is IEEE~754 double precision with upward rounding, which makes
upper bounds of monotone expressions valid automatically; lower bounds are
computed as negated upper bounds, and the rounding errors of the Taylor-model
coefficients are bounded a priori and added to the remainder.  The
exponential, sine and cosine are implemented from the basic operations, since
the system library gives no error bounds in directed rounding.  The constants
($C_k$, $T_{\mathrm{lo}}$, $T_{\mathrm{hi}}(n)$, $2/e$) are certified in ball
arithmetic \cite{Arb}.  Plain floating point is used instead of an interval
library for reasons of cost: the search for $n=6$, $N=5$ evaluates
$2.8\cdot10^{10}$ boxes at about $10^5$ floating-point operations each.
The justification of these bounds and the checks on the implementation are
discussed in Section~\ref{sec:verification}.

\subsection{Search}
A box is bisected at the midpoint of the coordinate that contributes most to
the Taylor model of $a_n$.  The search is parallel: the work is split into
\emph{jobs}, boxes that are explored depth first by one thread and split
further when other threads are idle.  Every job is recorded in a log, together
with a hash of the source code, and an interrupted run can be resumed from its
log.  The job records and leaf counts allow a separate structural audit,
described in Section~\ref{sec:verification}; the implementation details are in
\cite[Section~S5]{Supplement}.

\section{Results}\label{sec:results}

\subsection{The case \texorpdfstring{$n=6$}{n=6}}
Table~\ref{tab:n6} summarizes the five strata.  All were run on an Apple
M4~Pro with 12 threads.  The strata $N\le4$ took 6 minutes in total, the
stratum $N=5$ about 24 hours.  The audit passes for all five logs.

\begin{table}[ht]
\centering
\footnotesize
\setlength{\tabcolsep}{3pt}
\begin{tabular}{c r r r r r r}
\toprule
$N$ & nodes & T & S & P & A & V\\
\midrule
1 & 31 & 1 & 0 & 0 & -- & 15\\
2 & 6\,849 & 51 & 230 & 241 & -- & 2\,903\\
3 & 1\,168\,309 & 4\,437 & 49\,768 & 51\,850 & -- & 478\,100\\
4 & 289\,078\,879 & 206\,539 & 12\,715\,982 & 13\,646\,214 & -- &
117\,970\,705\\
5 & 27\,957\,495\,167 & 1\,539\,057 & 1\,402\,756\,677 & 1\,151\,621\,008 &
3\,087\,024\,229 & 8\,335\,806\,613\\
\bottomrule
\end{tabular}
\medskip
\caption{The five strata for $n=6$, with $T_{\mathrm{hi}}=6.99$: number of
nodes of the search tree and of leaves closed by each test.  In every stratum,
$\text{nodes}=2\,\text{leaves}-1$ and no box was undecided.}
\label{tab:n6}
\end{table}

\begin{proof}[Proof of Theorem~\ref{thm:main}]
By Section~\ref{sec:prelim}, an $M_6$-extremal function has $N\le6$ atoms.
For $N=1,\dots,5$ the searches of Table~\ref{tab:n6} closed covers of $R_N$
without undecided boxes, so by Proposition~\ref{prop:correct} no $M_6$-extremal
has $N\le5$ atoms.
\end{proof}

\begin{proof}[Proof of Corollary~\ref{cor:krzyz6}]
Combine Theorem~\ref{thm:main} with Theorem~\ref{thm:am-lz} for $n=6$.
\end{proof}

\subsection{The known cases}
Before the production run, the complete computation was carried out for
$n=3,4,5$ with the same code, tests and certified cutoffs.  All strata close;
the largest, $n=5$, $N=4$, has $5.8\cdot10^7$ nodes and takes $93$\,s.  The
KKT test A closes 33\%, 23\% and 22\% of the leaves in the strata $N=n-1$ for
$n=3,4,5$.  See \cite[Table~S1]{Supplement}.

\subsection{The numerical landscape}\label{sec:landscape}
To put the margins in perspective, Table~\ref{tab:landscape} lists the strict
local maxima of $\abs{a_6}$ over measures with at most six atoms found by
$10^5$ random starts of a local optimizer.  Each was checked, in floating
point, for a Hessian that is negative definite modulo rotation within its
stratum (a fixed number of atoms), and for the KKT condition $h\ge0$, which
controls the creation of new atoms to first order only.  These checks do not
certify local maximality among all measures, and the list need not be
complete.
The best configuration found with at most five atoms has $\abs{a_6}=0.700865$, so the
stratum $N=5$ has to be resolved with a margin of about $0.035$.  This
configuration has one atom of mass $0.774$ and four of mass about $0.18$,
arranged symmetrically.  It satisfies the relations of
Theorem~\ref{thm:kkt}, with $\abs{\sigma_1}=0.728$, so these relations alone
cannot exclude the stratum $N=5$; the search combines them with the
requirement $\abs{a_6}\ge2/e$ (tests P and V).  The analogous runners-up for $n=2,\dots,8$ all have
$n-1$ atoms and gaps of approximately $0.21/n$ to $2/e$.

\begin{table}[ht]
\centering
\begin{tabular}{c c c c}
\toprule
$\abs{a_6}$ & $2/e-\abs{a_6}$ & atoms & $t$\\
\midrule
0.735758882 & 0 & 6 & 1\\
0.700865261 & 0.0349 & 5 & 1.4804\\
0.663004776 & 0.0728 & 4 & 2.0000\\
0.661875816 & 0.0739 & 4 & 2.0166\\
0.657131510 & 0.0786 & 4 & 2.4374\\
0.618009572 & 0.1177 & 3 & 2.6180\\
0.612551107 & 0.1232 & 3 & 3.0688\\
0.605226130 & 0.1305 & 3 & 3.9500\\
0.551908447 & 0.1839 & 2 & 4.3580\\
0.550037386 & 0.1857 & 2 & 4.7831\\
0.541028711 & 0.1947 & 2 & 6.2158\\
0.449313548 & 0.2864 & 1 & 9.8373\\
\bottomrule
\end{tabular}
\medskip
\caption{Strict local maxima of $\abs{a_6}$ found numerically (floating point,
not certified).  Some are compositions of lower-order maxima with $z^k$: for
example $0.618009572=2\phi^3e^{-\phi^2}$, $\phi=(1+\sqrt5)/2$, comes from the
$n=2$ runner-up composed with $z^3$.}
\label{tab:landscape}
\end{table}

\subsection{Verification and reproducibility}\label{sec:verification}
The computational proof has two parts: valid exclusion bounds on each box,
and a search that covers the whole parameter domain.  For the first,
\cite{Enclosures} proves the floating-point bounds used by the C code,
function by function, under its stated rounding assumptions.  Ball-arithmetic
scripts certify the derivative constants and mass cutoffs; additional scripts
check the numerical constants in the enclosure proofs and inspect the
compiled binary for the properties that the rounding model assumes.  Thus the validity of the tests rests on
explicit error bounds, rather than on agreement with sampled values.

For coverage, a separate script audits the logs.  It checks the root domain,
the bisections relating jobs, the leaf counts at each depth through a Kraft
identity, and completion without undecided boxes.  The leaves within each job
are not logged individually: their coverage rests on the depth-first search,
about 30 lines of C that push both halves of every bisected box.  The audit
therefore checks the consistency of the recorded search, not the numerical
bounds at its leaves.  The precise checks and their scope are given in
\cite[Section~S6]{Supplement}.

The software, certification scripts and complete logs are available in
\cite{Zenodo}.  Empirical checks compare bounds with an independent Python
implementation, reproduce the known cases on a second
machine with a different compiler, test interrupted and resumed runs, and
check for memory errors and data races.  These checks support the
implementation; the proof uses the enclosure bounds and the coverage
argument above.  Full details are in
\cite[Section~S8]{Supplement}.

\section{Outlook: the case \texorpdfstring{$n=7$}{n=7}}\label{sec:outlook}

The runner-up for $n=7$ has six atoms and a gap of $0.0298$.  The tree-size
estimator of the code, run with uncertified constants, predicts about
$10^{13}$ leaves for the critical stratum $N=6$, which has dimension $11$, or
about $3\cdot10^5$ core-hours at the speed of the present code
\cite[Section~S9]{Supplement}.  This is beyond a workstation but
within reach of a modest allocation on a computing cluster.  Degree-three
Taylor models, and the relations~\eqref{eq:kkt-power} for $N\le n-2$ with the
$\sigma_i$ as extra unknowns, could reduce the tree.



\begin{thebibliography}{99}

\bibitem{AglerMcCarthy2021}
J.~Agler and J.~E.~McCarthy, \emph{The Krzy\.z conjecture and an entropy
conjecture}, J.\ Anal.\ Math.\ \textbf{144} (2021), 207--226.

\bibitem{Brown1987}
J.~E.~Brown, \emph{Iteration of functions subordinate to schlicht functions},
Complex Variables Theory Appl.\ \textbf{9} (1987), 143--152.

\bibitem{Ermers1990}
R.~Ermers, \emph{Coefficient estimates for bounded nonvanishing functions},
PhD thesis, Katholieke Universiteit Nijmegen, 1990.

\bibitem{Horowitz1978}
C.~Horowitz, \emph{Coefficients of nonvanishing functions in $H^\infty$},
Israel J.\ Math.\ \textbf{30} (1978), 285--291.

\bibitem{HSZ1977}
J.~A.~Hummel, S.~Scheinberg and L.~Zalcman, \emph{A coefficient problem for
bounded nonvanishing functions}, J.\ Anal.\ Math.\ \textbf{31} (1977),
169--190.

\bibitem{Arb}
F.~Johansson, \emph{Arb: efficient arbitrary-precision midpoint-radius
interval arithmetic}, IEEE Trans.\ Comput.\ \textbf{66} (2017), 1281--1292.

\bibitem{Krushkal2009}
S.~L.~Krushkal, \emph{Hyperbolic distances, nonvanishing holomorphic functions
and Krzy\.z's conjecture}, arXiv:0908.2587 (2009).

\bibitem{Krushkal2016}
S.~L.~Krushkal, \emph{Hyperbolic geodesics, Krzy\.z's conjecture and beyond},
arXiv:1603.02668 (2016).

\bibitem{Krushkal2020}
S.~L.~Krushkal, \emph{A new look at Krzy\.z's conjecture}, arXiv:2003.14214
(2020).

\bibitem{Krushkal2022}
S.~L.~Krushkal, \emph{Two coefficient conjectures for nonvanishing Hardy
functions, I}, J.\ Math.\ Sci.\ \textbf{268} (2022), 199--221.

\bibitem{Krzyz1968}
J.~Krzy\.z, \emph{Coefficient problem for bounded nonvanishing functions},
Ann.\ Polon.\ Math.\ \textbf{20} (1968), 314.

\bibitem{LeiZhang2026}
J.~Lei and T.~Zhang, \emph{Proof of the Agler--McCarthy entropy conjecture},
arXiv:2605.03949 (2026).

\bibitem{MacDonald2026}
S.~F.~MacDonald, \emph{Structural aspects of extremal functions in the Krzy\.z
conjecture}, arXiv:2605.08411 (2026).

\bibitem{MakinoBerz2003}
K.~Makino and M.~Berz, \emph{Taylor models and other validated functional
inclusion methods}, Int.\ J.\ Pure Appl.\ Math.\ \textbf{4} (2003), 379--456.

\bibitem{MSUV2015}
M.~J.~Mart\'in, E.~T.~Sawyer, I.~Uriarte-Tuero and D.~Vukoti\'c,
\emph{The Krzy\.z conjecture revisited}, Adv.\ Math.\ \textbf{273} (2015),
716--745.

\bibitem{Rockafellar1970}
R.~T.~Rockafellar, \emph{Convex Analysis}, Princeton University Press, 1970.

\bibitem{Rump2006}
S.~M.~Rump, \emph{Verification of positive definiteness}, BIT \textbf{46}
(2006), 433--452.

\bibitem{Samaris2003}
N.~Samaris, \emph{A proof of Krzy\.z's conjecture for the fifth coefficient},
Complex Var.\ Theory Appl.\ \textbf{48} (2003), 753--766.

\bibitem{Tan1983}
D.-L.~Tan, \emph{Coefficient estimates for bounded nonvanishing functions},
Chinese Ann.\ Math.\ Ser.~A \textbf{4} (1983), 97--104 (Chinese).

\bibitem{Supplement}
F.~Wikström, \emph{Implementation of the branch-and-bound for the sixth
Krzy\.z coefficient}, supplementary material, in \cite{Zenodo}.

\bibitem{Enclosures}
F.~Wikström, \emph{Rigorous enclosures in the branch-and-bound for the sixth
Krzy\.z coefficient}, supplementary material, in \cite{Zenodo}.

\bibitem{Zenodo}
F.~Wikström, \emph{Software and logs for ``A computer-assisted proof of
Krzy\.z's conjecture for the sixth coefficient''}, Zenodo (2026), \href{https://doi.org/10.5281/zenodo.23288285}{doi:10.5281/zenodo.23288285}.

\end{thebibliography}
\end{document}